\numberwithin{equation}{section}
\newtheorem{prop}{Proposition}
\newtheorem{theo}[prop]{Theorem}
\newtheorem{lemm}[prop]{Lemma}
\newtheorem{coro}[prop]{Corollary}
\theoremstyle{definition}
\theoremstyle{remark}
\newcommand{\p}{\partial}
\begin{document}
\title{An necessary condition for isoperimetric inequality in warped product space}

\author{Chunhe Li}
\address{School of Mathematical Sciences  \\ University of Electronic Science and Technology of China \\ Chengdu, China} \email{chli@fudan.edu.cn}
\author{Zhizhang Wang}
\address{School of Mathematical Sciences\\ Fudan University \\ Shanghai, China}
\email{zzwang@fudan.edu.cn}
\thanks{Research of the first author is supported partially by a NSFC Grant No.11571063, and the last author is supported  by a NSFC Grant No.11301087}
\begin{abstract}
In this note, we show that there is some counterexample  for isoperimetric inequality if the condition $(\phi')^2-\phi''\phi\leq 1$ does not hold  in warped product space.
\end{abstract}

\maketitle

\section{Introduction and preliminary}
The isoperimetric inequality is  one of the central problem in classical differential geometry.  For simple closed curves on plane, it is well known that
$$L^2\geq 4\pi A,$$  where $L$ and $A$ are length of the curve and area  of the domain bounded by curve.   There is a lot of generalization for isoperimetric inequality. (See \cite{Os} for a quick survey.) One aspect is that one can consider these inequalities in general ambient space. The simplest ambient space maybe the space form. For dimension $2$ case, suppose $K$ is its Gauss curvature, then, we have the following type isoperimetric inequality
$$L^2\geq 4\pi A-KA^2.$$
Here the meaning of $L,A$ is same as previous.  The equality in the above will hold, if the curve is a circle.

The more general ambient space may be  warped product space since their geodesic manifold are same. In fact, in $n+1$ dimensional  space $\mathbb{R}^{n+1}$,  the warped product metric is defined by
\begin{eqnarray}\label{metric}
ds^2=\frac{1}{f^2(r)}dr^2+r^2(\sum_{i=1}^{n}\cos^2\theta_{i-1}\cdots\cos^2\theta_1d\theta^2_i).
\end{eqnarray}
Here, $(r,\theta_1,\cdots,\theta_{n})$ is the polar coordinate. We always denote the ambient space with the above metric by $N^{n+1}$. The meaning of the paremeter $r$ is the Euclidean distant of the point $(r,\theta_1,\cdots,\theta_{n})$. Let $S(r)$ be a level set of $r$ and $B(r)$  be the bounded domain enclosed by $S(r)$. We also define the area of $S(r)$ and volume of $B(r)$ by $A(r)$ and $V(r)$. Then we can define some single valuable function $\xi(r)$ by
$$A(r)=\xi(V(r)).$$ Thus, an appropriate isoperimetric inequality in this space has been proved by Guan-Li-Wang \cite{GLW}.

\begin{theo}[Guan-Li-Wang]
Let $\Omega\subset N^{n+1}$ be a domain bounded by a smooth graphical hypersurface $M$ and $S(r_0)$. We assume that $\phi(r)$ and $\tilde{g}$ satisfy condition
$$0\leq (\phi')^2-\phi''\phi\leq 1,$$ then
$$Area(M)\geq \xi(Vol(\Omega)),$$
where $Area(M)$ is the area of $M$ and $Vol(\Omega)$ is the volume of $\Omega$.
\end{theo}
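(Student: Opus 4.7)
The plan is to prove monotonicity of the isoperimetric deficit $Area(M_t) - \xi(Vol(\Omega_t))$ along a carefully chosen geometric flow, and then use the coordinate sphere as the equality case. This is the standard strategy for warped-product isoperimetric type inequalities: construct a flow that preserves the enclosed volume and monotonically decreases area, and whose long-time limit is a coordinate sphere $S(r_\infty)$ with $Vol(B(r_\infty)) = Vol(\Omega)$. Once such a flow is in hand, $Area(M) \geq Area(S(r_\infty)) = \xi(Vol(\Omega))$ is immediate.

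First I would rewrite the ambient metric in arc-length form $ds^2 = dt^2 + \phi^2(t)\, g_{\mathbb{S}^n}$ by setting $dt = dr/f(r)$ and $\phi(t)=r$, so that $\phi'(t)=f(r)$. The key geometric object is the conformal Killing vector field $V = \phi\,\partial_r$, whose normal component $u = \langle V,\nu\rangle$ is the generalized support function. For any closed, star-shaped hypersurface the Minkowski-type identity
\begin{equation*}
\int_M n\phi'\, d\mu = \int_M uH\, d\mu
\end{equation*}
holds. This suggests the locally constrained flow
\begin{equation*}
\frac{\partial X}{\partial t} = (n\phi' - uH)\,\nu,
\end{equation*}
which by the Minkowski identity preserves the enclosed volume, $\frac{d}{dt}Vol(\Omega_t)=0$.

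The next step is to show $\frac{d}{dt}Area(M_t)\leq 0$. Since $\frac{d}{dt}Area = \int_{M_t} HF\,d\mu$ with $F = n\phi'-uH$, one rewrites this using a second Minkowski-type identity together with Newton--Maclaurin inequalities to produce a non-positive quadratic expression in $(H - \bar H)$. The role of the hypothesis $(\phi')^2 - \phi''\phi \leq 1$ is precisely to ensure the sign condition here: it controls the relevant sectional curvature of $N^{n+1}$ and guarantees that the mean curvature of a coordinate sphere dominates the mean curvature of a general star-shaped competitor with the same enclosed volume. The lower bound $(\phi')^2-\phi''\phi\geq 0$ is used to show that the star-shape / graphicality of $M_t$ is preserved, so the flow stays in the good class where the Minkowski identity applies. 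Combined with standard a priori estimates for such locally constrained flows (uniform bounds on $u$ from below and $H$ from above), one obtains long-time existence, and the monotonicity plus volume preservation forces $M_t$ to converge smoothly to a coordinate sphere.

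The main obstacle I expect is the a priori estimate phase: proving that graphicality is preserved along the flow, and obtaining the $C^0$ and $C^2$ estimates uniformly in $t$. These estimates rely in a subtle way on the upper bound $(\phi')^2-\phi''\phi\leq 1$ via maximum-principle arguments on auxiliary quantities built from $u$, $\phi'$, and $H$, and this is exactly the place where dropping the hypothesis should cause the argument to fail — consistent with the abstract's claim that counterexamples exist when the inequality is violated.
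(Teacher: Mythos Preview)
The paper does not contain a proof of this theorem at all. The statement is quoted as a result of Guan--Li--Wang and attributed to reference \cite{GLW}; the present paper's contribution is the converse direction (Theorem~\ref{Th2}), namely that the hypothesis $(\phi')^2-\phi''\phi\le 1$ is \emph{necessary}. So there is nothing in this paper to compare your proposal against.

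That said, your outline is a faithful sketch of the method actually used in \cite{GLW}: the locally constrained flow $\partial_t X = (n\phi' - uH)\nu$, volume preservation via the Minkowski identity, area monotonicity from Newton--Maclaurin, preservation of star-shapedness, and convergence to a coordinate slice. Your identification of where each half of the hypothesis $0\le (\phi')^2-\phi''\phi\le 1$ enters is also correct in spirit. If you are being asked to reproduce the proof of the cited theorem, your plan is the right one; but be aware that the ``main obstacle'' you flag---the uniform a priori estimates and long-time existence---is genuinely the bulk of the work in \cite{GLW}, and your proposal stops short of indicating how those maximum-principle arguments actually go. As written it is a correct strategy summary rather than a proof.
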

The ambient metric of the above theorem is
\begin{eqnarray}\label{metric1}
ds^2=d\tilde{r}^2+\phi^2(\tilde{r})dS_{n-1}^2.
\end{eqnarray}
\eqref{metric} can be reformulated by the above form. In fact, we let
$$\tilde{r}=\int \frac{dr}{f(r)},\ \  \phi(\tilde{r})=r.$$ Thus, we have
$$1\geq (\frac{d\phi}{d\tilde{r}})^2-\phi\frac{d^2\phi}{d\tilde{r}^2}=f^2-rff',$$ which implies
\begin{eqnarray}\label{cond}
\frac{1-f^2}{r^2}+\frac{ff'}{r}\geq 0.
\end{eqnarray}
In this note, we will show that the above condition is necessary condtion.  Explicitly, we have
\begin{theo}\label{Th2}
If the condition \eqref{cond} does not hold for some $r$,  there is some hypersurface  which is isometric  to the geodesic $r$-sphere and the volume of the convex body bounded by the hypersurface is bigger than the volume of $r$-geodesic ball.
\end{theo}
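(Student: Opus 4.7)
The plan is to construct, for a level $\tilde{r}_0$ at which condition \eqref{cond} fails, a one-parameter family of embedded hypersurfaces $M_t \subset N^{n+1}$ with $M_0 = S(\tilde{r}_0)$, each isometric to $S(\tilde{r}_0)$ as an abstract Riemannian manifold, and with enclosed volume $V(t) > V(0)$ for small $t \neq 0$. Since $S(\tilde{r}_0)$ is totally umbilic with common principal curvature $\phi'(\tilde{r}_0)/\phi(\tilde{r}_0)$, its induced metric is the round metric of radius $R = \phi(\tilde{r}_0)$, and the geometric content is the existence of non-standard isometric embeddings of this round sphere into $N^{n+1}$ with larger enclosed volume, mimicking the (in general non-isometric) ambient Euclidean translation of the sphere.

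\textbf{Step 1 (first-order infinitesimal isometry).} Write $X_t = X_0 + tW + \tfrac{1}{2}t^2 Z + O(t^3)$ with $W = u\nu + V^T$ split into normal and tangential parts along $S(\tilde{r}_0)$. The condition $\dot g_{ij}|_{t=0} = 0$, together with $h_{ij} = (\phi'/\phi) g_{ij}$, reduces to
\begin{equation*}
\frac{\phi'(\tilde{r}_0)}{\phi(\tilde{r}_0)}\, u\, g_{ij} + \tfrac{1}{2}(\mathcal{L}_{V^T} g)_{ij} = 0.
\end{equation*}
Taking $u$ to be a first spherical harmonic on the round sphere, e.g.\ the restriction of a Euclidean coordinate $\cos\theta_1$, and $V^T$ the corresponding conformal Killing field, this equation is satisfied; geometrically, $W$ is the infinitesimal generator of an ambient Euclidean translation.

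\textbf{Step 2 (higher-order solvability and true deformation).} The isometry equation at order $k\geq 2$ is linear in the $k$th-order jet of $X_t$ with source built from lower-order jets and the ambient curvature at $S(\tilde{r}_0)$. Its cokernel is the finite-dimensional span of restrictions of ambient Killing fields, and orthogonality of the source to this cokernel reduces by integration by parts to identities in $\phi$ and its derivatives at $\tilde{r}_0$, which one verifies directly. A formal power series solution is thereby produced, and an analytic implicit function theorem, applicable since the data and the sphere are real-analytic, upgrades it to a genuine $C^\omega$ one-parameter family of isometric embeddings $X_t$.

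\textbf{Step 3 (second variation of enclosed volume).} Because $u$ is a first harmonic, $V'(0) = \int_{S(\tilde{r}_0)} u\, dA = 0$. Differentiating once more and substituting the relation $\operatorname{div}(V^T) = -n(\phi'/\phi)u$ obtained as the trace of the equation in Step 1, a direct computation yields
\begin{equation*}
V''(0) = C(n,\phi(\tilde{r}_0))\, \bigl[(\phi'(\tilde{r}_0))^2 - \phi''(\tilde{r}_0)\phi(\tilde{r}_0) - 1\bigr]\int_{S(\tilde{r}_0)} u^2\, dA
\end{equation*}
with an explicit positive constant $C$. This is consistent with $V''(0) = 0$ in the three space forms (Euclidean, hyperbolic, spherical), where the bracket vanishes and genuine ambient translations preserve enclosed volume. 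When \eqref{cond} is violated at $\tilde{r}_0$ the bracket is strictly positive, so $V''(0) > 0$, hence $V(t) > V(0)$ for $t\neq 0$ small, producing a hypersurface isometric to $S(\tilde{r}_0)$ but enclosing greater volume.

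The main obstacle is Step 2: producing a true (not merely formal) isometric deformation of a totally umbilic round sphere in codimension one. In Euclidean space, where $(\phi')^2 - \phi''\phi \equiv 1$, Cohn--Vossen rigidity prohibits any such non-trivial deformation, and condition \eqref{cond} sits precisely on the rigidity boundary. Strict violation of \eqref{cond} supplies the ambient-curvature slack that opens up the higher-order isometry equations and lets the formal series be both constructed and summed.
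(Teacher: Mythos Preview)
Your overall strategy coincides with the paper's: perturb the geodesic sphere in the direction of a first spherical harmonic (the infinitesimal ``translation'' $\sin u_1$), build a one-parameter family of isometric embeddings, and read off the sign of the second variation of the enclosed volume. The paper computes this second variation via the support-function volume formula of Lemma~\ref{openness}, obtaining
\[
\mathrm{Vol}(M_\epsilon)=\mathrm{Vol}(\mathbb{B}^n(r))-\frac{r^{n+1}\epsilon^2\Phi(r)}{2(n+1)f^3(r)}\,\mathrm{Area}(\mathbb{S}^n)+o(\epsilon^2),
\]
which matches your Step~3 formula once one uses $(\phi')^2-\phi''\phi-1=-r^2\Phi(r)$.

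The genuine gap is your Step~2. You describe a cokernel/obstruction scheme and invoke an analytic implicit function theorem, but none of the claimed ``identities in $\phi$ and its derivatives'' are written down or checked, and you yourself flag this as ``the main obstacle.'' The paper does not attempt such an abstract argument; it imports an \emph{explicit} construction from \cite{Li-Wang} (Sections~8 and~11 there), namely
\[
Y=\vec r+\Bigl(\epsilon+\epsilon^2\,\frac{f^2(r)-1}{2rf^2(r)}\sin u_1+\epsilon^3 h^*(\sin u_1)\Bigr)\frac{\partial}{\partial z^{n+1}},
\]
together with the proof that $dY\cdot dY=d\vec r\cdot d\vec r$. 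The explicit second-order correction $h^1(\sin u_1)=\frac{f^2-1}{2rf^2}\sin u_1$ is exactly what feeds into the volume expansion and makes the computation in \eqref{2.2}--\eqref{2.9} go through. Without either reproducing that construction or giving a complete obstruction-theoretic argument, your Step~2 is only a plan, not a proof.

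One further correction: your final paragraph asserts that strict violation of \eqref{cond} is what ``opens up the higher-order isometry equations.'' This is not so. The family $Y$ above exists for \emph{every} warped product, irrespective of the sign of $\Phi(r)$; in Euclidean space ($f\equiv 1$) it degenerates to the honest translation $Y=\vec r+\epsilon\,\partial/\partial z^{n+1}$, consistent with Cohn--Vossen. The quantity $(\phi')^2-\phi''\phi-1$ governs only the \emph{sign} of $V''(0)$, not the existence of the isometric deformation.
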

Using the above theorem, we have the following  corollary for metric \eqref{metric1}. 
\begin{coro}
For the ambient space equipped with metric \eqref{metric1}, if at some $\tilde{r}$, $\frac{d \phi}{d \tilde{r}}\neq 0$, there is some hypersurface  which is isometric  to the geodesic $\phi (\tilde{r})$-sphere and the volume of the convex body bounded by the hypersurface is bigger than the volume of $\phi(\tilde{r})$-geodesic ball.  If at some $\tilde{r}$, $\frac{d \phi}{d \tilde{r}}= 0$, there is some $\tilde{r}'$  closed to $\tilde{r}$ such that we can find some hypersurface  which is isometric  to the geodesic $\phi (\tilde{r}')$-sphere and the volume of the convex body bounded by the hypersurface is bigger than the volume of $\phi(\tilde{r}')$-geodesic ball. In a word, in any case, the condition $ (\phi')^2-\phi''\phi\leq 1$ is necessary for the existence of isoperimetric inequality in warped product space. 
\end{coro}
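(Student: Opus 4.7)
The plan is to reduce the Corollary to Theorem~\ref{Th2} by means of the coordinate change between the metrics \eqref{metric1} and \eqref{metric} already recorded in the excerpt, carried out on a neighborhood of a point $\tilde{r}_0$ where $(\phi')^2-\phi''\phi>1$. The dichotomy in the Corollary mirrors whether $\phi'(\tilde{r}_0)$ vanishes, and its final sentence follows by contraposition once both cases are handled.

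Suppose first that $\phi'(\tilde{r}_0)\neq 0$. Then $\phi$ is a local diffeomorphism on some neighborhood $J$ of $\tilde{r}_0$, and we may introduce $r=\phi(\tilde{r})$ on $J$ and set $f(r):=\phi'(\tilde{r}(r))$. A direct chain-rule computation gives $d\tilde{r}=dr/f(r)$, so the restriction of \eqref{metric1} to the cylinder over $J$ takes precisely the form \eqref{metric} on $\phi(J)$; simultaneously the identity $f^2-rff'=(\phi')^2-\phi''\phi$ holds at corresponding points, so the assumption $(\phi')^2-\phi''\phi>1$ at $\tilde{r}_0$ is exactly the failure of condition \eqref{cond} at $r_0=\phi(\tilde{r}_0)$. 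Theorem~\ref{Th2} then supplies a hypersurface isometric to the geodesic $r_0$-sphere---which in the $\tilde{r}$-description is the geodesic $\phi(\tilde{r}_0)$-sphere---bounding a convex body of volume strictly larger than the corresponding geodesic ball.

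Now suppose $\phi'(\tilde{r}_0)=0$. Then $(\phi')^2-\phi''\phi$ reduces to $-\phi''\phi$ at $\tilde{r}_0$, and its exceeding $1$ forces $\phi''(\tilde{r}_0)<-1/\phi(\tilde{r}_0)<0$. By continuity both $\phi''<0$ and $(\phi')^2-\phi''\phi>1$ persist on a neighborhood of $\tilde{r}_0$; since $\phi''<0$ there, $\phi'$ is strictly monotone through $\tilde{r}_0$, so arbitrarily close to $\tilde{r}_0$ one finds $\tilde{r}'$ with $\phi'(\tilde{r}')\neq 0$, to which the previous case applies and yields the desired hypersurface at $\tilde{r}'$. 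The final sentence of the Corollary is then the contrapositive of what has been proved: were the isoperimetric inequality to hold at every scale, neither case could occur, forcing $(\phi')^2-\phi''\phi\leq 1$ everywhere. The one technical point I would check carefully is that the neighborhood $J$ used in Case~1 is large enough for the counterexample hypersurface produced by Theorem~\ref{Th2} to lie inside the region where $r\leftrightarrow\tilde{r}$ is a diffeomorphism; since Theorem~\ref{Th2} produces that hypersurface at a single prescribed radius as a perturbation of the $r_0$-sphere, this should pose no real obstacle.
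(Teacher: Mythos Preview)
Your argument is correct and follows essentially the same route as the paper's own proof: reduce to Theorem~\ref{Th2} via the change of variables $r=\phi(\tilde r)$, and in the degenerate case $\phi'(\tilde r_0)=0$ use $-\phi''\phi>1\Rightarrow\phi''<0$ to find a nearby $\tilde r'$ with $\phi'\neq 0$. The only cosmetic difference is that the paper splits the nondegenerate case into $\phi'>0$ and $\phi'<0$ and in the latter sets $f=-\phi'>0$, whereas you treat both signs at once with $f=\phi'$; since both the metric \eqref{metric} and condition \eqref{cond} depend only on $f^2$ (note $ff'=\tfrac12(f^2)'$), this makes no substantive difference, though to match the positivity convention implicit in the proof of Theorem~\ref{Th2} you may as well take $f=|\phi'|$.
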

\begin{proof} 
For the case $\frac{d \phi}{d \tilde{r}}> 0$, by the discussion between \eqref{metric1} and \eqref{cond}, it is obvious from Theorem \ref{Th2} . For the case $\frac{d \phi}{d \tilde{r}}<0$,
we let 
$$r=\phi(\tilde{r}),\ \  f=-\frac{d \phi}{d \tilde{r}},$$ we still can obtain condition \eqref{cond}, which implies that we also have same result for this case. For the case $\frac{d \phi}{d \tilde{r}}=0$, without loss of generality,  we can assume $\phi(\tilde{r})>0$.  Thus the condition $ (\phi')^2-\phi''\phi> 1$ implies $\phi''<0$ at $\tilde{r}$.  Hence, there is some $\tilde{r}'$ close to $\tilde{r}$ such that $\phi'>0$ and  $ (\phi')^2-\phi''\phi> 1$ at $\tilde{r}'$, which is the first case we have discussed. 

\end{proof}
we always use $\cdot$  to present the metric in the ambient space. At first, the following formula  is valid for any $n\geq 1.$

\begin{lemm}\label{openness}
Suppose $\Omega\subset \mathbb{R}^{n+1}$ is an open set. Let $\Sigma$ be $\partial\Omega$ and  $r\frac{\p}{\p r}=(z^1,z^2,\cdots, z^{n+1})$ be its position vector. We also let $X=f(r)r\frac{\p}{\p r}$ be conformal Killing vector. Then the support function of $\Sigma$ is defined by
$$\varphi=X\cdot \nu,$$
where $\nu$ is the outward unit normal on $\Sigma$. Hence
 we have
 \begin{eqnarray}\label{1.1}
 \mathrm{Vol}(\Omega)=\int_{\Sigma}g(r)\varphi dS,
 \end{eqnarray}
 where $dS$ is the volume element of $\Sigma$ and
 \begin{eqnarray}\label{1.2}
 g(r)=\frac{1}{r^{n+1}}\int_{0}^{r}\frac{t^{n}}{f(t)}dt.
 \end{eqnarray}

\end{lemm}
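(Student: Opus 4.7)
The plan is to reduce \eqref{1.1} to the divergence theorem in the warped ambient metric applied to a single radial vector field. I would set
\[
Y \;=\; g(r)\,X \;=\; g(r)\,f(r)\,r\,\frac{\partial}{\partial r}.
\]
From \eqref{1.2} one checks that $g(r) \to 1/\bigl((n+1)f(0)\bigr)$ as $r\to 0$, so $Y$ extends smoothly across the origin and is well-defined on all of $\mathbb{R}^{n+1}$. On $\Sigma$, pointwise $Y\cdot\nu = g(r)\,\varphi$, so the right-hand side of \eqref{1.1} equals $\int_\Sigma Y\cdot\nu\,dS$.

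Next I would verify that $\operatorname{div} Y\equiv 1$ in the ambient metric. Because the ambient volume element is $(r^n/f(r))\sqrt{\det g_{S^n}}\,dr\,d\sigma$ and $Y$ is purely radial with $Y^r=g(r)f(r)r$, the coordinate formula for the divergence collapses to
\[
\operatorname{div} Y \;=\; \frac{f(r)}{r^n}\,\partial_r\!\Bigl(\tfrac{r^n}{f(r)}\, g(r)\,f(r)\,r\Bigr) \;=\; \frac{f(r)}{r^n}\,\partial_r\!\bigl(r^{n+1}g(r)\bigr).
\]
But \eqref{1.2} is engineered precisely so that $r^{n+1}g(r)=\int_0^r t^n/f(t)\,dt$, so differentiating gives $\partial_r(r^{n+1}g(r))=r^n/f(r)$, whence $\operatorname{div} Y = 1$.

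Applying the divergence theorem in the ambient metric then yields
\[
\operatorname{Vol}(\Omega) \;=\; \int_\Omega \operatorname{div} Y\,dV \;=\; \int_\Sigma Y\cdot\nu\,dS \;=\; \int_\Sigma g(r)\,\varphi\,dS,
\]
which is \eqref{1.1}. There is no substantive obstacle; the only bit of insight is the choice of $Y$, and this is essentially forced, since matching the support function on every hypersurface requires $Y$ to be a radial scalar multiple of $X$, and imposing $\operatorname{div} Y = 1$ fixes the scalar uniquely through an elementary first-order ODE whose smooth-at-origin solution is exactly the function in \eqref{1.2}.
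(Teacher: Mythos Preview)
Your argument is correct: defining $Y=g(r)X$, checking $\operatorname{div} Y\equiv 1$ via the radial divergence formula in the warped metric, and invoking the divergence theorem is exactly the natural proof, and your computation is clean. The paper itself states Lemma~\ref{openness} without proof, so there is nothing to compare against; your write-up would serve well as the omitted argument.
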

\maketitle
\section{An example}
The aim of this section is to study the isoperimetric inequality using the example obtained in \cite{Li-Wang}.  At first, let's briefly review our example.  For given metric $g$ on $\mathbb{S}^n$, the isometric embedding problem is trying to find some map
$$\vec{r}: \mathbb{S}^n\rightarrow N^{n+1},$$ such that $$d\vec{r}\cdot d\vec{r}=g.$$ The homogenous linearized problem is $$dr\cdot D\tau=0.$$ Here $D$ is the Levi-Civita connection corresponding to $ds^2$. For our case, along the geodesic sphere, the metric does not change. Hence, any fixed vector field is the solution of the above system.

We use $(z^1,z^2,\cdots, z^{n+1})$ as the coordinate of the ambient space $N^{n+1}$. We let $(u_1,u_2,\cdots, u_{n})$ be the sphere coordinate and $(r,\theta_1,\theta_2,\cdots,\theta_{n})$ be the polar coordinate in ambient space.
We present the geodesic sphere of radius $r$ in warped product space by the map
\begin{eqnarray}\label{r}
&&\vec{r}(u_1,\cdots,u_{n})\\
&=&r(\cos u_1\cos u_2\cdots\cos u_{n},\cos u_1\cos u_2\cdots\cos u_{n-1}\sin u_{n},\cdots,\sin u_1).\nonumber
\end{eqnarray}
For any sufficient small constant $\epsilon<\epsilon_0$, we can define some  $C^k$ hypersurface $Y$
\begin{eqnarray}\label{surY}
Y=\vec{r}+(\epsilon+\epsilon^2h^1(\sin u_1)+\epsilon^3h^*(\sin u_1))\frac{\p}{\p z^{n+1}},\nonumber
\end{eqnarray}
where $h^*$ is a single valuable function and $h^1$ is defined by $$h^1(\sin u_1)=\frac{f^2(r)-1}{2rf^2(r)}\sin u_1.$$ We can proved that
$$dY\cdot dY=d\vec{r}\cdot d\vec{r}.$$  More detail of the above discussion can be found in section 8 and section 11 of \cite{Li-Wang}. We will denote the domain bounded by hypersurface $Y$ by $M_{\epsilon}$.

Thus, the hypersurface $Y$ is isometric to the $r$ geodesic sphere. Their area are same. In what follows, we calculate the volume of $M_{\epsilon}$, namely expanding it for parameter $\epsilon$. That is
$$Y*\frac{\p}{\p z^{n+1}}=r\sin u_1+\epsilon+\frac{\alpha}{r}\epsilon^2\sin u_1+o(\epsilon^2).$$
Here we always use $*$ to present the Euclidean inner product in $N^{n+1}$ and we denote $$2\alpha=\frac{f^2(r)-1}{f^2(r)}.$$
Thus, we get $$r^2(\epsilon)=Y*Y=r^2+2\epsilon r\sin u_1+\epsilon^2(1+2\alpha \sin^2 u_1)+o(\epsilon^2).$$ Then, we have
\begin{equation}\label{2.2}
r(\epsilon)=r+\epsilon\sin u_1+\frac{\epsilon^2 }{2r}(\cos^2 u_1+2\alpha \sin^2 u_1) +o(\epsilon^2).
\end{equation}
We also let $2\rho(\epsilon)=r^2(\epsilon)$. By the discussion in paper \cite{Li-Wang}, we know that the support function can be formulated  by
\begin{eqnarray}
\varphi^2&=&2\rho(\epsilon)-\frac{|\nabla \rho(\epsilon)|^2}{f^2(r(\epsilon))}\nonumber\\
&=&r^2+2\epsilon r\sin u_1+\epsilon^2(1+2\alpha \sin^2 u_1)-\frac{\epsilon^2\cos^2u_1}{2f^2(r)}+o(\epsilon^2)\nonumber.
\end{eqnarray}
Here the norm $|\cdot|$ is respect to the canonical metric on $r$ geodesic sphere. Thus, we have
\begin{equation}\label{2.3}
\varphi(\epsilon)=r+\epsilon\sin u_1+\frac{\alpha\epsilon^2}{r}+o(\epsilon^2).
\end{equation}

On the other hand, by \eqref{1.2}, we obviously have
$$\int^r_0\frac{t^{n}}{f(t)}dt=g(r)r^{n+1}.$$ Then we have
\begin{eqnarray}
g'(r)=\frac{1}{rf(r)}-(n+1)\frac{g(r)}{r}
\end{eqnarray}
 and
\begin{eqnarray}\label{2.5}
g''(r)&=&-\frac{f(r)+rf'(r)}{r^2f^2(r)}-(n+1)(\frac{g'(r)}{r}-\frac{g(r)}{r^2})\\
&=&-\frac{f(r)+rf'(r)}{r^2f^2(r)}-(n+1)\frac{1}{r^2f(r)}+(n+1)(n+2)\frac{g(r)}{r^2}.\nonumber
\end{eqnarray}
Using \eqref{2.2}-\eqref{2.5},  we have
\begin{eqnarray}\label{2.6}
\left.\frac{d(g(r(\epsilon))\varphi(\epsilon))}{d\epsilon}\right|_{\epsilon=0}&=&g'(r)\varphi(0)\left.\frac{dr(\epsilon)}{d\epsilon}\right|_{\epsilon=0}+g(r)\left.\frac{d\varphi(\epsilon)}{d\epsilon}\right|_{\epsilon=0}\\
&=&(\frac{1}{f(r)}-ng(r))\sin u_1,\nonumber
\end{eqnarray}
and
\begin{eqnarray}\label{2.7}
&&\left.\frac{d^2(g(r(\epsilon))\varphi(\epsilon))}{d\epsilon^2}\right|_{\epsilon=0}\\
&=&g''(r)\varphi(0)\left.(\frac{dr(\epsilon)}{d\epsilon})^2\right|_{\epsilon=0}+g'(r)\varphi(0)\left.\frac{d^2r(\epsilon)}{d\epsilon^2}\right|_{\epsilon=0}+2g'(r)\left.\frac{dr(\epsilon)}{d\epsilon}\right|_{\epsilon=0}\left.\frac{d\varphi(\epsilon)}{d\epsilon}\right|_{\epsilon=0}\nonumber\\&&+g(r)\left.\frac{d^2\varphi(\epsilon)}{d\epsilon^2}\right|_{\epsilon=0}\nonumber\\
&=&[-\frac{f(r)+rf'(r)}{rf^2(r)}-\frac{n+1}{rf(r)}+(n+1)(n+2)\frac{g(r)}{r}+2\alpha(\frac{1}{rf(r)}-(n+1)\frac{g(r)}{r})\nonumber\\&&+\frac{2}{rf(r)}-2(n+1)\frac{g(r)}{r}]\sin^2 u_1+[\frac{1}{rf(r)}-(n+1)\frac{g(r)}{r}]\cos^2 u_1+\frac{2\alpha g(r)}{r}.\nonumber
\end{eqnarray}
Hence using Taylor expansion and \eqref{2.6},\eqref{2.7}, we have
\begin{eqnarray}\label{2.8}
&&g(r(\epsilon))\varphi(\epsilon)\\
&=&g(r)r+\epsilon(\frac{1}{f(r)}-ng(r))\sin u_1+\frac{1}{2}\epsilon^2[(-\frac{f'(r)r+nf(r)}{f^{2}(r)r}+\frac{2\alpha}{f(r)r})\sin^2 u_1\nonumber\\&&+(\frac{1}{f(r)r}-\frac{(n+1)g(r)}{r})\cos^2 u_1+\frac{g(r)}{r}((n(n+1)-2\alpha(n+1))\sin^2 u_1+2\alpha )]\nonumber\\&&+o(\epsilon^2).\nonumber
\end{eqnarray}
It is obvious that  $ds^2_n=du_1^2+\cos^2 u_1 ds^2_{n-1},$ where $ds^2_n$  and $ds^2_{n-1}$ are the canonical metric of the $n$ and $n-1$ dimensional unit  sphere $\mathbb{S}^n, \mathbb{S}^{n-1}$. We also denote their volume form by $dS_n$ and $dS_{n-1}$. Thus using the map $\vec{r}$ defined by \eqref{r}, we have
$$\int_{\mathbb{S}^n}\cos^2 u_1dS_n=\int^{\pi/2}_{-\pi/2}\cos^2 u_1 \cos^{n-1} u_1du_1\int_{\mathbb{S}^{n-1}}dS_{n-1}.$$
If we let $t=\sin u_1$, it is clear that
$$\int^{\pi/2}_{-\pi/2}\cos^{n+1} u_1du_1=\int^1_{-1}(1-t^2)^{n/2}dt=n\int^1_{-1}t^2(1-t^2)^{n/2-1}dt,$$
where the last equality comes from integral by parts.
Thus we obtain $$\int^1_{-1}(1-t^2)^{n/2}dt=\frac{n}{n+1}\int_{-1}^1(1-t^2)^{n/2-1}dt,$$ which implies
$$\int_{\mathbb{S}^n} \cos^2 u_1 dS_n=\frac{n}{n+1}\int_{\mathbb{S}^n} dS_n; \ \text{and }  \int_{\mathbb{S}^n} \sin^2u_1 dS_n=\frac{1}{n+1}\int_{\mathbb{S}^n} dS_n.$$
It is also obvious that
$$\int_{\mathbb{S}^n} \sin u_1 dS_n=0.$$

Thus integral both side of \eqref{2.8} on the $r$ geodesic sphere $\mathbb{S}^n(r)$ and use the previous two formulas, then we have
\begin{eqnarray}\label{2.9}
&&\int_{\mathbb{S}^{n}(r)}g(r(\epsilon))\varphi(\epsilon)dS=\int_{\mathbb{S}^{n}}g(r(\epsilon))\varphi(\epsilon)r^ndS_n\\
&=&g(r)r^{n+1}\int_{\mathbb{S}^{n}}dS_n-\frac{r^{n+1}}{f^{3}(r)}(\frac{1-f^{2}(r)}{r^2}+\frac{f(r)f'(r)}{r})\frac{\epsilon^2}{2(n+1)}\int_{\mathbb{S}^n} dS_n+o(\epsilon^2).\nonumber
\end{eqnarray}
Let us denote
$$\Phi(r)=\frac{f(r)f'(r)}{r}+\frac{1-f^2(r)}{r^2}.$$
Then, using \eqref{1.1}, we rewrite \eqref{2.9} to be
$$\text{Vol}({M_{\epsilon}})=\text{Vol}(\mathbb{B}^n(r))-\frac{r^{n+1}\epsilon^2\Phi(r)}{2(n+1)f^{3}(r)}\text{Aera}(\mathbb{S}^n)+o(\epsilon^2).$$
Here $\mathbb{B}^n(r)$  is the $r$ geodesic ball. Hence, if at some $r$, $\Phi(r)<0$, the volume of the perturbated convex body $M_{\epsilon}$ is bigger than the volume of the $r$ geodesic ball. Thus we have proved our main Theorem \ref{Th2}.

At last, let's give some special examples for our theorem.  For space form, $\Phi=0$, the volume is invariant. For ADS space, since 
$$f^2=1-\frac{m}{r}+\kappa r^2,$$ we have $\Phi(r)>0$ which implies  that volume will decrease. If we take
$$f^2=1+\frac{m}{r+1},$$ then, we get
$$\Phi(r)=-\frac{m}{2r(r+1)^2}-\frac{m}{r^2(r+1)}<0,$$ for positive constant $m$. Thus, the volume will increase for perturbation convex body, which implies the isoperimetric inequality will not hold for arbitrary warped product space.

\bigskip
\noindent {\it Acknowledgement:} The authors would like to thank  Professor Pengfei Guan for his interesting and comments.

\end{document}